\newcommand{\Rmnum}[1]{\expandafter\@slowromancap\romannumeral#1@}
\numberwithin{equation}{section}
\newtheorem{theorem}{Theorem}[section]
\newtheorem{lemma}{Lemma}[section]
\newtheorem{remark}{Remark}[section]
\begin{document}

\title{\large\bf On the structure of the $h$-fold sumsets}
\author{\large Jun-Yu Zhou, Quan-Hui Yang\footnote{Emails:~yangquanhui01@163.com. This
author was supported by the National Natural Science Foundation
for Youth of China, Grant No. 11501299, the Natural Science
Foundation of Jiangsu Province, Grant Nos. BK20150889,~15KJB110014.}}
\date{} \maketitle
 \vskip -3cm
\begin{center}
\vskip -1cm { \small
1. School of Mathematics and Statistics, Nanjing University of Information \\
Science and Technology, Nanjing 210044, China }
 \end{center}

\begin{abstract} Let~$A$ be a set of nonnegative integers. Let~$(h A)^{(t)}$
be the set of all integers in the sumset~$hA$ that have at least~$t$ representations
as a sum of~$h$ elements of~$A$.
In this paper, we prove that, if~$k \geq 2$, and~$A=\left\{a_{0}, a_{1}, \ldots, a_{k}\right\}$
is a finite set of integers such that~$0=a_{0}<a_{1}<\cdots<a_{k}$
and $\gcd\left(a_{1}, a_2,\ldots, a_{k}\right)=1,$ then there exist integers
~$c_{t},d_{t}$ and sets~$C_{t}\subseteq[0, c_{t}-2]$, $D_{t} \subseteq[0, d_{t}-2]$ such that
$$(h A)^{(t)}=C_{t} \cup\left[c_{t}, h a_{k}-d_{t}\right] \cup\left(h a_{k-1}-D_{t}\right)
$$
for all~$h \geq\sum_{i=2}^{k}(ta_{i}-1)-1.$ This improves a recent result of Nathanson
with the bound  $h \geq (k-1)\left(t a_{k}-1\right) a_{k}+1$.

{\it 2010 Mathematics Subject Classifications:} 11B13

{\it Keywords:} Nathanson's theorem, Sumsets
\end{abstract}

\section{Introduction}
Let~$A$~and~$B$~be sets of integers. The sumsets and difference sets are defined by
$$
A+B=\{a+b: a \in A, b \in B\},\quad A-B=\{a-b: a \in A, b \in B\}$$
respectively. For any integer~$t,$ we define the sets
$$t+A=\{t\}+A,\quad t-A=\{t\}-A.$$
For~$h \geq 2$,~we denote by~$h A$~the~$h$-fold sumset of~$A$,~which is the set of all integers~$n$~of the form~ $n=a_{1}+a_{2}+\cdots+a_{h}$,~where~$a_{1}, a_{2}, \ldots, a_{h}$~are elements of~$A$~and not necessarily distinct.

In \cite{Nathanson72,{Nathanson96}}, Nathanson proved the following fundamental beautiful
theorem on the structure of $h$-fold sumsets.

{\bf Nathanson's Theorem A.} Let $A=\{a_0,a_1,\ldots,a_k\}$ be a finite set of integers such that $$0=a_0<a_1<\cdots<a_k\quad \text{and}\quad \gcd(A)=1.$$
Let $h_1=(k-1)(a_k-1)a_k+1$. There are nonnegative integers $c_1$ and $d_1$ and
finite sets $C_1$ and $D_1$ with $C_1\subseteq [0,c_1-2]$ and $D_1\subseteq [0,d_1-2]$
such that $$hA=C_1\cup [c_1,ha_k-d_1]\cup (ha_k-D_1)$$ for all $h\ge h_1$.

Later, Wu, Chen and Chen \cite{Wu} improved the lower bound of $h_1$ to
$\sum_{i=2}^k a_i-k$. Recently, Granville and Shakan \cite{GS}, and Granville and Walker \cite{GW}
gave some further results on this topic.

Let $A$ be a set of integers. For every positive integer $h,$ the $h$-fold representation function $r_{A, h}(n)$~counts the number of representations of $n$ as the sum of $h$ elements of $A.$ Thus,
$$
r_{A, h}(n)=\sharp\left\{\left(a_{j_{1}}, \ldots, a_{j_{h}}\right) \in A^{h}: n=\sum_{i=1}^{h} a_{j_{i}} \text { and } a_{j_{1}} \leq \cdots \leq a_{j_{h}}\right\}.
$$
For every positive integer~$t,$~let~$(h A)^{(t)}$~be the set of all integers~$n$~that have at least $t$ representations as the sum of $h$ elements of~$A,$~that is,
$$
(h A)^{(t)}=\left\{n \in \mathbf{Z}: r_{A, h}(n) \geq t\right\}
$$

Recently, Nathanson \cite{Nathanson} found that the sumsets $(hA)^{(t)}$ have the same structure as the sumset
$hA$ and proved the following theorem.

{\bf Nathanson's Theorem B.} Let $k\ge 2$, and let $A=\{a_0,a_1,\ldots,a_k\}$ be a finite set of integers
such that $0=a_0<a_1<\cdots<a_k$ and $\gcd(A)=1$. For every positive integer
$t$, let $h_t=(k-1)(ta_k-1)a_k+1$. There are nonnegative integers $c_t$ are $d_t$
and finite sets $C_t$ and $D_t$ with $C_t\subseteq [0,c_t-2]$ and $D_t\subseteq [0,d_t-2]$
such that $$(hA)^{(t)}=C_t\cup [c_t,ha_k-d_t]\cup (ha_k-D_t)$$ for all $h\ge h_t$.

In this paper, motivated by the idea of Wu, Chen and Chen \cite{Wu}, we improved the lower bound of $h$ in Nathanson's Theorem B.

\begin{theorem}\label{thm1}
Let $k \geq 2,$ and let $A=\left\{a_{0}, a_{1}, \ldots, a_{k}\right\}$ be a finite set of integers such that
$$
0=a_{0}<a_{1}<\cdots<a_{k} \quad \text { and } \quad \operatorname{gcd}(A)=1
$$
For every positive integer $t,$ let
$$
h_{t}=\sum_{i=2}^{k} (ta_{i}-1)-1
$$
There are nonnegative integers $c_{t}$ and $d_{t}$ and finite sets $C_{t}$ and $D_{t}$ with
$$
C_{t} \subseteq\left[0, c_{t}-2\right] \quad \text { and } \quad D_{t} \subseteq\left[0, d_{t}-2\right]
$$
such that
\begin{eqnarray}\label{eq111}
(h A)^{(t)}=C_{t} \cup\left[c_{t}, h a_{k}-d_{t}\right] \cup\left(h a_{k}-D_{t}\right)
\end{eqnarray}
for all $h \geq h_{t}$.
\end{theorem}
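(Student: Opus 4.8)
\medskip
\noindent\textit{Proof strategy.}
The plan is to translate everything into lattice‑point counting. Since $a_0=0$ may be used freely as padding, one has
\[
r_{A,h}(n)=\#\Bigl\{(x_1,\dots,x_k)\in\mathbb{Z}_{\ge 0}^{k}:\ \textstyle\sum_{i=1}^{k}x_ia_i=n,\ \sum_{i=1}^{k}x_i\le h\Bigr\},
\]
so, writing $B=\{a_1,\dots,a_k\}$ and $r_B^{*}(n)=\#\{\mathbf{x}\in\mathbb{Z}_{\ge 0}^{k}:\sum x_ia_i=n\}$ for the unrestricted partition count, $r_{A,h}(n)$ is simply the number of representations of $n$ over $B$ of weight $\le h$. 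Because $\gcd(B)=1$ we have $r_B^{*}(n)\to\infty$, so $\{n:r_B^{*}(n)<t\}$ is finite; I put $c_t:=1+\max\{n\ge 0:r_B^{*}(n)<t\}$ and $C_t:=\{0\le n<c_t:r_B^{*}(n)\ge t\}$, and then $C_t\subseteq[0,c_t-2]$ automatically since $r_B^{*}(c_t-1)<t$. The behaviour near $ha_k$ is reduced to the behaviour near $0$ by the reflection $a\mapsto a_k-a$: the set $A'=\{a_k-a:a\in A\}=\{0,a_k-a_{k-1},\dots,a_k-a_1,a_k\}$ again satisfies the hypotheses ($\gcd(A')=1$ is an easy divisibility check), and $r_{A,h}(n)=r_{A',h}(ha_k-n)$; define $B'$, $r_{B'}^{*}$, $d_t$, $D_t$ from $A'$ exactly as $c_t,C_t$ were defined from $A$.

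I would then reduce the theorem to the single equivalence
\[
r_{A,h}(n)\ge t\ \Longleftrightarrow\ r_B^{*}(n)\ge t\ \text{ and }\ r_{B'}^{*}(ha_k-n)\ge t\qquad(h\ge h_t).
\]
Granting this, one has $\{n:r_B^{*}(n)\ge t\}=C_t\cup[c_t,\infty)$ and $\{n:r_{B'}^{*}(ha_k-n)\ge t\}=(-\infty,ha_k-d_t]\cup(ha_k-D_t)$, and intersecting them — using that $h\ge h_t$ forces $c_t+d_t\le a_kh$, so the three pieces are correctly nested — gives exactly $C_t\cup[c_t,ha_k-d_t]\cup(ha_k-D_t)$, which is \eqref{eq111}. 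The implication $\Rightarrow$ is trivial, since $r_{A,h}(n)\le r_B^{*}(n)$ and $r_{A,h}(n)=r_{A',h}(ha_k-n)\le r_{B'}^{*}(ha_k-n)$; so the whole content of the theorem is the converse.

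For the converse, the two ``small'' regimes are quick. If $n<c_t$ and $r_B^{*}(n)\ge t$, the $t$ sparsest representations of $n$ over $B$ each use at most $(c_t-1)/a_1$ parts, and one checks $(c_t-1)/a_1\le h_t$ from the Frobenius‑type bound on $c_t$, so $r_{A,h}(n)\ge t$; symmetrically when $ha_k-n<d_t$, working over $B'$ with $a_k-a_{k-1}$ playing the role of $a_1$. The heart of the matter is the range $c_t\le n\le ha_k-d_t$, where one must produce $t$ representations $n=\sum_{i=1}^{k}x_ia_i$ of weight $\le h$. Following the idea of Wu, Chen and Chen, one first makes a representation ``as top‑heavy as possible'' by repeatedly trading $a_k$ copies of some $a_i$ $(i<k)$ for $a_i$ copies of $a_k$; this produces a representation whose weight is essentially $n/a_k$ plus a residue correction governed by $a_2,\dots,a_{k-1}$. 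One then spawns $t-1$ further representations by the cheap move ``replace $a_1$ copies of $a_k$ by $a_k$ copies of $a_1$,'' each step costing only $a_k-a_1$ extra parts; this move is legal because $n$ lying in the middle range forces the top‑heavy representation to contain many copies of $a_k$ — precisely the information packaged in $r_{B'}^{*}(ha_k-n)\ge t$. The delicate point, and the step I expect to be the main obstacle, is that $n\le ha_k-d_t$ must be exploited through the arithmetic of the reflected semigroup rather than through a worst‑case‑over‑residues estimate: a direct computation shows that the naive bound overshoots, by a factor close to $2$, what the theorem claims, whereas the sharp accounting — the one Wu, Chen and Chen carry out for $t=1$, now run with the quantities $ta_i$ in place of $a_i$ — yields that $h\ge h_t=\sum_{i=2}^{k}(ta_i-1)-1$ already suffices, and specialises to their bound $\sum_{i=2}^{k}a_i-k$ at $t=1$.
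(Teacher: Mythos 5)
Your reduction to the equivalence $r_{A,h}(n)\ge t \Leftrightarrow r_B^{*}(n)\ge t \text{ and } r_{B'}^{*}(ha_k-n)\ge t$ is a legitimate reformulation (the reflection identity $r_{A,h}(n)=r_{A',h}(ha_k-n)$ is correct, and one can check that your ``limit'' definitions of $c_t,C_t,d_t,D_t$ are the ones the theorem forces), but the proposal does not prove the hard direction, and you say so yourself: the middle range $c_t\le n\le ha_k-d_t$ is exactly where the theorem lives, and for it you offer only the expectation that Wu--Chen--Chen's accounting ``run with $ta_i$ in place of $a_i$'' will give the threshold $h_t=\sum_{i=2}^k(ta_i-1)-1$. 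That is the entire content of the result, not a routine verification. Concretely, your device for producing $t$ representations --- take a top-heavy representation and repeatedly trade $a_1$ copies of $a_k$ for $a_k$ copies of $a_1$ --- needs the base representation to have weight at most $h-(t-1)(a_k-a_1)$ and to contain at least $(t-1)a_1$ copies of $a_k$; near the upper endpoint $n\approx ha_k-d_t$ the weight slack available is only about $d_t/a_k$, and nothing in the proposal shows this suffices, nor is the assertion that ``$r_{B'}^{*}(ha_k-n)\ge t$ packages precisely this information'' substantiated ($t$ unrestricted reflected representations do not obviously convert into $t$ weight-$\le h$ representations of $n$). A secondary gap: in the low regime you invoke ``the Frobenius-type bound on $c_t$'' to check $(c_t-1)/a_1\le h_t$, but for $k\ge 3$ the generators $a_1,a_2$ (or, on the reflected side, $a_k-a_{k-1},a_k-a_{k-2}$) need not be coprime, so the two-generator bound of the paper's Lemma \ref{lem4} does not apply directly, and the bound one gets from a general $k$-generator estimate (e.g.\ $c_t\le c'_t-a_k+1$ with $c'_t=\sum_{i=1}^{k-1}a_i(ta_{i+1}-1)$) is \emph{not} enough to make the crude estimate $n/a_1\le h_t$ go through; this piece too needs a genuine argument rather than a one-line check.

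It is also worth noting that your framework is structurally more demanding than the paper's. The paper never proves directly that the whole interval $[c_t,ha_k-d_t]$ is covered at level $h_t$: it proves (Lemma \ref{lem2}) only that the single window $[c'_t-a_k+1,c'_t-1]$ lies in $(h_tA)^{(t)}$ via a Chinese-remainder-style normal form with $0\le x_{i,s}\le ta_{i+1}-1$, adds the one point $c'_t\in((h_t+1)A)^{(t)}$ (Lemma \ref{lem3}), \emph{defines} $c_t,d_t$ as the extremities of the maximal interval of $(h_tA)^{(t)}$ containing that window, and then grows the structure by induction on $h$, where the same normal form plus the conclusion $x_{k,s}\ge 1$ lets one subtract $a_k$ and invoke the induction hypothesis. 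Your plan has no analogue of this induction, so it must establish the full covering statement for every $h\ge h_t$ at one stroke --- precisely the step you flag as the main obstacle and leave open. As it stands, the proposal is a plausible strategy sketch, not a proof.
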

\begin{remark}\label{rem1} Theorem \ref{thm1} is optimal.
\end{remark}

We shall prove Theorem \ref{thm1} and Remark \ref{rem1} in Section 3. In Section 2,
we give some lemmas.

\section{Some Lemmas}

\begin{lemma}\label{lem1}\cite[See Lemma 1]{Nathanson} Let $A$ be a set of integers.
For any positive integer $h$ and $t$, we have
$$(hA)^{(t)}+A\subseteq ((h+1)A)^{(t)}.$$
\end{lemma}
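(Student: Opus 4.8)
The plan is to unwind the definition of $(hA)^{(t)}$ through the representation function $r_{A,h}$ and reduce the inclusion to an elementary counting statement about multisets. Take an arbitrary element $m\in (hA)^{(t)}+A$ and write $m=n+a$ with $n\in (hA)^{(t)}$ and $a\in A$. By definition $r_{A,h}(n)\ge t$, so there exist at least $t$ distinct nondecreasing $h$-tuples $\mathbf{x}^{(1)},\dots,\mathbf{x}^{(t)}\in A^{h}$, each summing to $n$. My goal is to manufacture from these at least $t$ distinct nondecreasing $(h+1)$-tuples of elements of $A$ summing to $n+a$; this yields $r_{A,h+1}(n+a)\ge t$, hence $m=n+a\in ((h+1)A)^{(t)}$, and since $m$ was arbitrary the desired inclusion follows.

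For the construction I would identify each sorted tuple with the multiset of its entries, which is harmless because the passage between nondecreasing $h$-tuples from $A$ and size-$h$ multisets from $A$ is a bijection, and $r_{A,h}(n)$ is exactly the number of such multisets summing to $n$. Given $\mathbf{x}^{(j)}$, let $\mathbf{y}^{(j)}$ be the multiset obtained by adjoining one more copy of $a$; re-sorting gives a nondecreasing $(h+1)$-tuple in $A^{h+1}$ whose entries sum to $n+a$. It then suffices to verify that the multisets $\mathbf{y}^{(1)},\dots,\mathbf{y}^{(t)}$ are pairwise distinct, i.e. that the ``adjoin one copy of $a$'' map on multisets is injective.

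The injectivity is the one point that needs care, and it is settled by cancellation: if $\mathbf{y}^{(i)}=\mathbf{y}^{(j)}$ as multisets, then each contains at least one copy of $a$, so deleting a single copy of $a$ from both recovers $\mathbf{x}^{(i)}$ and $\mathbf{x}^{(j)}$, forcing $\mathbf{x}^{(i)}=\mathbf{x}^{(j)}$ and hence $i=j$. Thus the $t$ augmented tuples are genuinely distinct, giving $r_{A,h+1}(n+a)\ge t$ and completing the argument. I expect no substantive obstacle here; the only thing to handle cleanly is precisely this multiset cancellation, which I would phrase in multiset (equivalently sorted-tuple) language rather than by inserting $a$ into a chosen position of an ordered tuple, so as to avoid the spurious ambiguity that ordered insertion would introduce.
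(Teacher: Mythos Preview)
Your argument is correct. The passage from sorted $h$-tuples to multisets is exactly the right device, and the injectivity of the ``adjoin one copy of $a$'' map via multiset cancellation is the only nontrivial point, which you handle cleanly.

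As for comparison with the paper: note that the paper does not actually prove this lemma. It is quoted verbatim from Nathanson's paper \cite{Nathanson} (as Lemma~1 there) and used as a black box. So there is no in-paper proof to compare against; your write-up would in fact supply what the present paper omits. The standard proof (as in Nathanson) proceeds along the same lines as yours: each of the $t$ representations of $n$ as an unordered sum of $h$ elements of $A$ is extended by the fixed summand $a$, and distinctness is preserved because removing one copy of $a$ recovers the original representation. Your explicit emphasis on working with multisets rather than ordered insertions is a good stylistic choice, since it sidesteps the bookkeeping issue of where to insert $a$ in a sorted tuple.
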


\begin{lemma}\label{lem2} Let $k\ge 2$,
and let $A=\{a_0,a_1,\ldots,a_k\}$ be a set of integers satisfying
$0=a_0<a_1<\cdots<a_k$ and $\gcd(A)=1.$ For every positive integer $t$, let
$h_t=\sum_{i=2}^k (ta_i-1)-1$ and $c'_t=\sum_{i=1}^{k-1}a_i(ta_{i+1}-1)$.
If $c'_t-a_k<n<c'_t$, then there exist at least $t$ distinct nonnegative
$k$-tuples $(x_{1,s},x_{2,s},\ldots,x_{k,s})~(1\le s\le t)$ satisfying
\begin{eqnarray*} n=x_{1,s}a_1+x_{2,s}a_2+\cdots+x_{k,s}a_k\end{eqnarray*}
and $x_{1,s}+x_{2,s}+\cdots+x_{k,s}\le h_t$ for $s=1,2,\ldots,t$.
\end{lemma}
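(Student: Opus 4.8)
\emph{Strategy.} The plan is first to produce a single representation of $n$ whose $a_{1}$-coefficient is at least $(t-1)a_{2}$, and then to trade $a_{2}$ copies of $a_{1}$ for $a_{1}$ copies of $a_{2}$ in order to split it into $t$ distinct representations. Put $m=n-(t-1)a_{1}a_{2}$, $c''_{t}=c'_{t}-(t-1)a_{1}a_{2}$ and $h''_{t}=h_{t}-(t-1)a_{2}$; a direct computation gives
\[
c''_{t}=a_{1}(a_{2}-1)+\sum_{i=2}^{k-1}a_{i}(ta_{i+1}-1),\qquad h''_{t}=(a_{2}-1)+\sum_{i=3}^{k}(ta_{i}-1)-1,
\]
and the hypothesis $c'_{t}-a_{k}<n<c'_{t}$ becomes $c''_{t}-a_{k}<m<c''_{t}$. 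I would reduce the lemma to the single claim that $m$ has a representation $m=\sum_{i=1}^{k}x_{i}a_{i}$ with all $x_{i}\ge 0$ and $\sum_{i}x_{i}\le h''_{t}$. Granting this, set $X_{1}=x_{1}+(t-1)a_{2}$ and $X_{i}=x_{i}$ for $i\ge 2$, so that $\sum_{i}X_{i}a_{i}=n$, $\sum_{i}X_{i}\le h_{t}$ and $X_{1}\ge(t-1)a_{2}$. For $s=1,\dots,t$ let $x_{1,s}=X_{1}-(s-1)a_{2}$, $x_{2,s}=X_{2}+(s-1)a_{1}$ and $x_{i,s}=X_{i}$ for $3\le i\le k$; then each $(x_{1,s},\dots,x_{k,s})$ is a nonnegative representation of $n$ with $\sum_{i}x_{i,s}=\sum_{i}X_{i}-(s-1)(a_{2}-a_{1})\le h_{t}$, and the $t$ tuples are pairwise distinct because their $a_{2}$-entries $X_{2}+(s-1)a_{1}$ are all different ($a_{1}\ge 1$). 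This is exactly what the lemma requires.

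\emph{A reduced representation of $m$.} First, $m$ is a sum of elements of $A$: a classical bound on the Frobenius number (due to Brauer) gives $F(a_{1},\dots,a_{k})\le\sum_{i=1}^{k-1}a_{i}(a_{i+1}-1)-a_{k}$, which is at most $c''_{t}-a_{k}<m$, so some representation $m=\sum_{i}y_{i}a_{i}$ with $y_{i}\ge 0$ exists. I then reduce the digits: while $y_{1}\ge a_{2}$ apply $(y_{1},y_{2})\mapsto(y_{1}-a_{2},y_{2}+a_{1})$; next, for $i=2,3,\dots,k-1$ in succession, while $y_{i}\ge ta_{i+1}$ apply $(y_{i},y_{i+1})\mapsto(y_{i}-a_{i+1},y_{i+1}+a_{i})$. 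Every such move preserves the value $m$, keeps all entries nonnegative, and strictly decreases $\sum_{i}y_{i}$, so after finitely many moves one reaches a representation $m=\sum_{i}x_{i}a_{i}$ with $0\le x_{1}\le a_{2}-1$, $0\le x_{i}\le ta_{i+1}-1$ for $2\le i\le k-1$, and $x_{k}\ge 0$.

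\emph{The weight estimate, and the main point.} For this reduced representation, $\sum_{i}x_{i}a_{i}=m<c''_{t}$ rearranges to
\[
x_{k}a_{k}<(a_{2}-1-x_{1})a_{1}+\sum_{i=2}^{k-1}(ta_{i+1}-1-x_{i})a_{i},
\]
with every coefficient on the right nonnegative. Dividing by $a_{k}$ and adding $x_{1}+\dots+x_{k-1}$, each resulting term is increasing in its $x_{i}$ (since $a_{i}<a_{k}$), hence maximal at $x_{1}=a_{2}-1$ and $x_{i}=ta_{i+1}-1$, and there the bound collapses exactly to $(a_{2}-1)+\sum_{i=2}^{k-1}(ta_{i+1}-1)=h''_{t}+1$; thus $\sum_{i}x_{i}<h''_{t}+1$, i.e.\ $\sum_{i}x_{i}\le h''_{t}$, which finishes the proof. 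I expect the crux to be exactly this: choosing the shift $m=n-(t-1)a_{1}a_{2}$ together with the asymmetric digit ranges (the first digit bounded by $a_{2}-1$, the rest by $ta_{i+1}-1$) so that the estimate degenerates precisely to $h''_{t}+1$ — this is what buys the linear-in-$a_{i}$ bound $h_{t}$ instead of the quadratic bound of Nathanson's Theorem B. The only ingredient external to the argument is the Frobenius-number bound guaranteeing that $m$ (more generally, every integer exceeding $c'_{t}-a_{k}$) is representable, together with the trivial check that $m\ge 0$, which is automatic because $\sum_{i=1}^{k-1}a_{i}(a_{i+1}-1)-a_{k}\ge -1$ always.
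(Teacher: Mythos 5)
Your proof is correct, and it reaches the lemma by a genuinely different construction than the paper's. The paper starts from an arbitrary integer representation of $n$ (available since $\gcd(A)=1$) and, for each $s=1,\dots,t$ separately, reduces the first $k-1$ digits into the shifted residue windows $[(s-1)a_{i+1},\,sa_{i+1}-1]$; the hypothesis $n>c'_t-a_k$ then forces the last digit to be nonnegative, the hypothesis $n<c'_t$ yields the weight bound, and distinctness is automatic because the windows for different $s$ are disjoint (in fact all of the first $k-1$ coordinates differ). You instead shift to $m=n-(t-1)a_1a_2$, obtain a single nonnegative representation of $m$ from a Frobenius-number bound, reduce its digits into $[0,a_2-1]\times[0,ta_3-1]\times\cdots\times[0,ta_k-1]$, and then manufacture the $t$ representations of $n$ by trading $a_2$ copies of $a_1$ for $a_1$ copies of $a_2$, with distinctness read off the second coordinate; your weight estimate (the right-hand side is maximized at the extreme digits and collapses to $h''_t+1$) is the exact shifted analogue of the paper's estimate collapsing to $h_t+1$, so the analytic core is shared, but the combinatorial mechanism for producing $t$ representations is different. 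Your route buys a single reduction instead of $t$ and a trivial distinctness check; its cost is the appeal to an external representability result, namely $F(a_1,\dots,a_k)\le\sum_{i=1}^{k-1}a_ia_{i+1}-\sum_{i=1}^{k}a_i$. That bound is indeed correct (it follows from Brauer's chain bound because $d_{i-1}/d_i\le d_{i-1}\le a_{i-1}$ for $d_i=\gcd(a_1,\dots,a_i)$, and it is also exactly what the paper's own argument with $t=1$ proves), so there is no gap, but the paper's proof is self-contained: its residue reduction from an arbitrary integer representation lets the lower bound $c'_t-a_k<n$ do the work that Brauer's theorem does for you. All the remaining verifications in your write-up (value and nonnegativity preservation of the reduction moves, $\sum_i X_i\le h''_t+(t-1)a_2=h_t$, the identity $(a_2-1)+\sum_{i=3}^{k}(ta_i-1)=h''_t+1$, and $m\ge 0$) check out.
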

\begin{proof}
Since $\gcd\left(a_{1}, \ldots, a_{k}\right)=1,$ there exist integers~$x_{1}, \ldots, x_{k}$ such that
$$n=x_{1} a_{1}+\cdots+x_{k} a_{k}.$$
For any positive integer $s$, $[(s-1)a_{2},sa_{2}-1]$ is a complete residue system modulo $a_{2}$. Hence
there exists an integer $q$ such that
$x_{1}=a_{2} q+x_{1,s}$ with $(s-1)a_{2}\leq x_{1, s} \leq sa_{2}-1$.
This gives
$$n=x_{1, s} a_{1}+\left(a_{1}q+x_{2}\right) a_{2}+ \cdots +x_{k} a_{k}.$$
Let $x'_{2}=a_{1}q+x_{2}$. Similarly, there exists an integer $q'$ such that
$x'_{2}=a_{3} q'+x_{2,s}$ with $(s-1)a_{3}\leq x_{2, s} \leq sa_{3}-1$.
Now we have $$n=x_{1,s}a_1+x_{2,s}x_2+(a_2q'+x_3)a_3+\cdots +x_{k} a_{k}.$$
By continuing this process, we obtain
$$n=x_{1,s}a_{1}+x_{2,s}a_{2}+\cdots+x_{k,s}a_{k}$$
with $(s-1)a_{i+1} \leq x_{i,s} \leq sa_{i+1}-1$ for $i=1, \cdots, k-1$ and $x_{k,s}$ is some integer.
Hence, for any integer $s\in [1,t]$, we have
$$0\leq x_{i,s} \leq ta_{i+1}-1.$$
Since $n>c'_{t}-a_{k}$, it follows that
\begin{eqnarray*}
x_{k, s} a_{k}&=&n-\left(x_{1, s} a_{1}+x_{2, s} a_{2}+\cdots+x_{k-1, s} a_{k-1}\right) \\
& \geq &n-\left(t a_{2}-1\right) a_{1}-\cdots-\left(t a_{k}-1\right) a_{k-1} =n-c'_{t}>-a_{k},
\end{eqnarray*}
and then $x_{k,s}>-1$. Noting that $x_{k,s}$ is an integer, we have $x_{k,s} \geq 0$.
By the bound of $x_{i,s}$, the following nonnegative $k$-tuples
$$(x_{1, s},x_{2, s},\ldots,x_{k-1, s},x_{k, s})\quad (1\le s\le t)$$
are distinct.

Next, we shall prove that $x_{1, s}+x_{2, s}+\cdots+x_{k, s}\le h_t$ for $s=1,2,\ldots,t$.

For any integer $s\in [1,t]$, let $x_{1, s}+x_{2, s}+\cdots+x_{k, s}=u_s$.
Since $n<c'_{t}$, it follows that
$$\begin{aligned}
n &=x_{1, s} a_{1}+x_{2, s} a_{2}+\cdots+x_{k, s} a_{k} \\
&=x_{1, s} a_{1}+\cdots+x_{k-1, s} a_{k-1}+\left(u_s-x_{1, s}-x_{2, s}-\cdots-x_{k-1, s}\right) a_{k} \\
&=u_s a_{k}-x_{1, s}\left(a_{k}-a_{1}\right)-\cdots-x_{k-1, s}\left(a_{k}-a_{k-1}\right) \\
& \geq u_s a_{k}-\left(t a_{2}-1\right)\left(a_{k}-a_{1}\right)-\cdots-\left(t a_{k}-1\right)\left(a_{k}-a_{k-1}\right) \\
&=u_s a_{k}-a_{k}\left[\left(t a_{2}-1\right)+\cdots+\left(t a_{k}-1\right)\right]+a_{1}\left(t a_{2}-1\right)+\cdots+a_{k-1}\left(t a_{k}-1\right) \\
&=u_s a_{k}-\left(h_{t}+1\right) a_{k}+c'_{t} \\
&>u_s a_{k}-\left(h_{t}+1\right) a_{k}+n.
\end{aligned}$$
Hence $u_s a_{k}-\left(h_{t}+1\right) a_{k}<0$, and then $u_s<h_{t}+1$. Therefore, $u_s\leq h_{t}$.

This completes the proof of Lemma \ref{lem2}.
\end{proof}

\begin{lemma}\label{lem3} Let $c'_{t}$ and $h_t$ be defined in Lemma \ref{lem2}. Then
$$c'_{t}=\sum_{i=1}^{k-1}a_{i}\left(t a_{i+1}-1\right) \in\left(\left(h_{t}+1\right) A\right)^{(t)}.$$
\end{lemma}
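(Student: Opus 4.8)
The plan is to run, almost verbatim, the successive-division construction from the proof of Lemma~\ref{lem2}, but now with $n=c'_t$ in place of an $n$ lying strictly below $c'_t$. First I would record the elementary reformulation: since $a_0=0\in A$, an integer $n$ lies in $(hA)^{(t)}$ if and only if there are at least $t$ distinct nonnegative $k$-tuples $(x_1,\dots,x_k)$ with $\sum_{i=1}^k x_i a_i=n$ and $\sum_{i=1}^k x_i\le h$, because any such tuple is turned into a genuine sorted $h$-tuple of elements of $A$ by adjoining $h-\sum_i x_i$ copies of $a_0$. So it suffices to produce $t$ distinct nonnegative $k$-tuples representing $c'_t$ with part-sum at most $h_t+1$.

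Next I would apply the algorithm of Lemma~\ref{lem2} with $n=c'_t$: starting from any representation $c'_t=x_1a_1+\cdots+x_ka_k$ (one exists since $\gcd(A)=1$, or simply take $x_i=ta_{i+1}-1$ for $i<k$ and $x_k=0$), reduce modulo $a_2,a_3,\dots,a_k$ in turn to obtain, for each $s\in\{1,\dots,t\}$, a $k$-tuple $(x_{1,s},\dots,x_{k,s})$ with $(s-1)a_{i+1}\le x_{i,s}\le s a_{i+1}-1$ for $1\le i\le k-1$. Exactly as in Lemma~\ref{lem2}, the constraint on $x_{1,s}$ places the first coordinates in pairwise disjoint intervals, so the $t$ tuples are pairwise distinct.

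It then remains to observe that the two places where Lemma~\ref{lem2} invoked the hypothesis $c'_t-a_k<n<c'_t$ both survive the specialization $n=c'_t$, with the strict inequalities merely relaxed. For nonnegativity of the last coordinate, the identity $x_{k,s}a_k=n-(x_{1,s}a_1+\cdots+x_{k-1,s}a_{k-1})\ge n-c'_t=0$ still gives $x_{k,s}\ge 0$. For the part-sum, writing $u_s=\sum_{i=1}^k x_{i,s}$, the same chain of estimates as in Lemma~\ref{lem2} now reads $c'_t=u_sa_k-\sum_{i=1}^{k-1}x_{i,s}(a_k-a_i)\ge u_sa_k-(ta_2-1)(a_k-a_1)-\cdots-(ta_k-1)(a_k-a_{k-1})=u_sa_k-(h_t+1)a_k+c'_t$, whence $u_sa_k\le(h_t+1)a_k$ and so $u_s\le h_t+1$. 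Combining these with the first paragraph shows $c'_t\in((h_t+1)A)^{(t)}$.

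I do not expect a genuine obstacle here: the entire content is in verifying that no step of Lemma~\ref{lem2}'s argument truly needs a \emph{strict} inequality, so that the boundary value $n=c'_t$ is admissible. The only point deserving care is the bound on $u_s$, where the loss of strictness is exactly absorbed by passing from $h_t$ to $h_t+1$ summands; this is precisely why the lemma is phrased for $(h_t+1)A$ rather than for $h_tA$, and it is what makes Lemma~\ref{lem3} serviceable as the base case of the induction (via Lemma~\ref{lem1}) in the proof of Theorem~\ref{thm1}.
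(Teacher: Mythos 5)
Your proof is correct, but it reaches the conclusion by a different mechanism than the paper. The paper's proof of Lemma~\ref{lem3} is a direct, explicit construction: for each $r=0,1,\ldots,t-1$ it rewrites $c'_t=\sum_{i=1}^{k-1}(ta_{i+1}-1)a_i$ by transferring $r$ copies of each product $a_ia_{i+1}$, obtaining $c'_t=p_{1,r}a_1+\cdots+p_{k,r}a_k$ with $p_{1,r}=(t-r)a_2-1$, $p_{i,r}=(t-r)a_{i+1}-1+ra_{i-1}$ for $2\le i\le k-1$, and $p_{k,r}=ra_{k-1}$; these tuples are visibly nonnegative, distinct (the last coordinates $ra_{k-1}$ differ), and have part-sum $h_t+1-r(a_k-a_1)\le h_t+1$. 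You instead rerun the modular-reduction algorithm from the proof of Lemma~\ref{lem2} at the boundary value $n=c'_t$, checking that the two uses of the strict hypothesis $c'_t-a_k<n<c'_t$ degrade gracefully: $x_{k,s}\ge 0$ still follows from $n-c'_t\ge 0$, and the part-sum estimate now yields $u_s\le h_t+1$ instead of $u_s\le h_t$, which is exactly what membership in $((h_t+1)A)^{(t)}$ requires; distinctness comes from the first coordinates lying in the disjoint windows $[(s-1)a_2,sa_2-1]$, and your padding-with-$a_0$ reformulation of $(hA)^{(t)}$ is the same implicit step the paper uses. The trade-off: the paper's argument is a short self-contained computation exhibiting concrete representations, while yours is more systematic --- in effect it proves a slightly stronger Lemma~\ref{lem2} valid on the half-open range up to and including $c'_t$ (with bound $h_t+1$ at the endpoint), from which Lemma~\ref{lem3} falls out --- but it cannot simply cite Lemma~\ref{lem2} (whose hypothesis excludes $n=c'_t$) and so must re-verify its steps, which you do. No gap.
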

\begin{proof} For $i=1,2,\ldots,k-1$, let $p_{i}=ta_{i+1}-1$. Then
\begin{eqnarray*}
c'_{t}=\left(t a_{2}-1\right) a_{1}+\cdots+\left(t a_{k}-1\right) a_{k-1}=p_{1} a_{1}+\cdots+p_{k-1} a_{k-1}.
\end{eqnarray*}
Noting that
$$
p_{1}+\cdots+p_{k-1}=\sum_{i=2}^{k} (ta_{i}-1)=h_{t}+1,
$$
we have $c'_{t} \in\left(h_{t}+1\right) A$.

Moreover, for any integers $r \in [0,t-1]$, we have
\begin{eqnarray*}
c'_{t} &=&\sum_{i=1}^{k-1}\left(t a_{i+1}-1\right)a_{i}
=\sum_{i=1}^{k-1}\left((t-r) a_{i+1}-1\right)a_{i}+r\sum_{i=1}^{k-1}a_ia_{i+1}\\
&=&\left((t-r) a_{2}-1\right)a_{1}+\sum_{i=2}^{k-1}((t-r)a_{i+1}-1+ra_{i-1})a_i+r a_{k-1}a_{k}\\
&:=&p_{1,r}a_{1}+p_{2,r}a_{2}+\cdots+p_{k-1,r}a_{k-1}+p_{k,r}a_{k},
\end{eqnarray*}
where $p_{1,r}=(t-r)a_2-1$, $p_{k,r}=ra_{k-1}$ and $p_{i,r}=(t-r)a_{i+1}-1+ra_{i-1}~(2\le i\le k-1)$.
Hence $p_{i,r}\geq 0$ for all $i\in[1,k]$ and
\begin{eqnarray*}\sum_{i=1}^{k} p_{i,r}&=(t-r) a_{2}-1+(t-r) a_{3}-1+r a_{1}+\cdots+(t-r) a_{k}-1+r a_{k-2}+r a_{k-1} \\
&=h_{t}+1-r\left(a_{2}+\cdots+a_{k}\right)+r\left(a_{1}+\cdots+a_{k-1}\right) \\
&=h_{t}+1-r\left(a_{k}-a_{1}\right) \leq h_{t}+1.
\end{eqnarray*}
Thus, $r_{A,{h_t+1}}(c'_{t})\geq t$, and so $c'_{t}\in\left(\left(h_{t}+1\right) A\right)^{(t)}$.
\end{proof}

\begin{lemma}\label{lem4} Let $n$ and $a_{1}$, $a_{2}$ be positive integers with $\gcd(a_{1},a_{2})=1$.
For any positive integer $t$, if $n>ta_{1}a_{2}-a_{1}-a_{2}$, then the diophantine equation
\begin{eqnarray}\label{eq1}a_{1}x+a_{2}y=n\end{eqnarray}
has at least $t$ nonnegative integer solutions. The lower bound of $n$ is also best possible.
\end{lemma}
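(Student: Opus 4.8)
The plan is to establish the two claims separately, both by the classical elementary argument for the two-variable Frobenius problem. For the existence of $t$ solutions: since $\gcd(a_1,a_2)=1$, there is a unique integer pair $(x_0,y_0)$ with $a_1x_0+a_2y_0=n$ and $0\le x_0\le a_2-1$ (choose $x_0$ to be the appropriate residue modulo $a_2$, after which $y_0$ is forced). From $x_0\le a_2-1$ and the hypothesis,
$$a_2y_0=n-a_1x_0\ge n-a_1(a_2-1)>\bigl(ta_1a_2-a_1-a_2\bigr)-a_1a_2+a_1=(t-1)a_1a_2-a_2,$$
so $y_0>(t-1)a_1-1$, and since $y_0\in\mathbf{Z}$ we get $y_0\ge (t-1)a_1\ge 0$. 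Now put $x_j=x_0+ja_2$ and $y_j=y_0-ja_1$ for $j=0,1,\dots,t-1$. Each $(x_j,y_j)$ satisfies \eqref{eq1}, has $x_j\ge 0$, and has $y_j\ge y_0-(t-1)a_1\ge 0$; the pairs are pairwise distinct because the first coordinates $x_0<x_0+a_2<\cdots<x_0+(t-1)a_2$ are. Hence \eqref{eq1} has at least $t$ nonnegative integer solutions.

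For optimality I would show that $n=ta_1a_2-a_1-a_2$ has exactly $t-1$ nonnegative solutions. Here $n\equiv -a_1\equiv a_1(a_2-1)\pmod{a_2}$, so every integer solution of $a_1x+a_2y=n$ has $x\equiv a_2-1\pmod{a_2}$; thus the candidates with $x\ge 0$ are exactly $x=a_2(j+1)-1$ with $j\ge 0$, each determining $y=(n-a_1x)/a_2\in\mathbf{Z}$. The condition $y\ge 0$ reads $a_1\bigl(a_2(j+1)-1\bigr)\le ta_1a_2-a_1-a_2$, which simplifies to $a_1(j+1)\le ta_1-1$, i.e.\ $j\le t-2$. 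So there are precisely $t-1$ nonnegative solutions, and therefore the bound $n>ta_1a_2-a_1-a_2$ cannot be relaxed.

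Both parts are routine, so I do not anticipate a genuine obstacle; the only points that need a moment's attention are checking that the normalized pair $(x_0,y_0)$ is already nonnegative (which follows from $(t-1)a_1\ge 0$) and that all $t$ shifted pairs stay nonnegative, which is exactly where the strict inequality in the hypothesis is used. This lemma is the two-variable input that will later be fed, together with Lemmas \ref{lem1}--\ref{lem3}, into the proof of Theorem \ref{thm1}.
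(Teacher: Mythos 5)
Your proof is correct and takes essentially the same route as the paper: both parametrize the integer solutions of $a_1x+a_2y=n$, use the hypothesis $n>ta_1a_2-a_1-a_2$ to produce at least $t$ nonnegative ones, and verify sharpness at $n=ta_1a_2-a_1-a_2$ (you get exactly $t-1$ solutions there, the paper at most $t-1$; either suffices). The only cosmetic difference is that you normalize $x_0\in[0,a_2-1]$ and exhibit the $t$ shifted solutions explicitly, whereas the paper counts the integers $k$ lying in an open interval of length greater than $t$.
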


\begin{proof}
Suppose that $n>ta_{1}a_{2}-a_{1}-a_{2}$. Let $(x_{0},y_{0})$ be a solution of the equation (\ref{eq1}). Then
all the integer solutions of the equation (\ref{eq1}) is
\begin{eqnarray}\label{eqk}\begin{cases}
x=x_{0}+ka_{2}, \\
y=y_{0}-ka_{1},
\end{cases}
~k\in \mathbb{Z}.\end{eqnarray}
In order to $x\ge 0$ and $y\ge 0$, we only need $x>-1$ and $y>-1$, that is,
\begin{eqnarray}\label{ineq1}\frac{-1-x_{0}}{a_{2}}<k<\frac{y_{0}+1}{a_{1}}.\end{eqnarray}
Since
\begin{eqnarray*}
&&\frac{y_{0}+1}{a_{1}}-\frac{-1-x_{0}}{a_{2}}=\frac{a_{1}+a_{2}+a_{1}x_{0}+a_{2}y_{0}}{a_{1}a_{2}}\\
&=&\frac{a_{1}+a_{2}+n}{a_{1}a_{2}}>\frac{a_{1}+a_{2}+ta_{1}a_{2}-a_{1}-a_{2}}{a_{1}a_{2}}=t,
\end{eqnarray*}
there exist at least $t$ integers $k$ such that (\ref{ineq1}) holds.

Therefore, the equation (\ref{eq1}) has at least $t$ nonnegative integer solutions.

Now suppose that $l=ta_{1}a_{2}-a_{1}-a_{2}$. Then
~$(ta_2-1,-1)$ is a solution of (\ref{eq1}). Take $x_0=ta_2-1$ and $y_0=-1$.
Then (\ref{eqk}) becomes
$$\begin{cases}
x=ta_{2}-1-ka_{2}, \\
y=-1 +ka_{1},
\end{cases}
~k\in \mathbb{Z}.$$
Since $x \geq 0$ and $y \geq 0$, it follows that $1\le k \le t-1$.
Hence there exist at most $t-1$ nonnegative integer solutions.

This completes the proof of Lemma \ref{lem4}.
\end{proof}

\section{Proofs}

\begin{proof}[Proof of Theorem 1]
Let $c'_{t}=\sum_{i=1}^{k-1}a_i(ta_{i+1}-1)$. By Lemma \ref{lem2}, there exist the smallest integers
$c_{t}$ and $d_{t}$ satisfying
$$\left[c'_{t}-a_{k}+1, c'_{t}-1\right] \subseteq\left[c_{t}, h_{t} a_{k}-d_{t}\right]
\subseteq\left(h_{t} A\right)^{(t)}.$$
It follows that $c_{t}-1 \notin\left(h_{t} A\right)^{(t)}$ and $h_{t} a_{k}-d_{t}+1 \notin\left(h_{t} A\right)^{(t)}$. Additionally
\begin{eqnarray}\label{eq22}c_{t} \leq c'_{t}-a_{k}+1,
\end{eqnarray}
\begin{eqnarray}\label{eq23}c'_{t}-1 \leq h_{t} a_{k}-d_{t}.
\end{eqnarray}
Define the finite sets~$C_{t}$~and~$D_{t}$~by
$$
C_{t}=\left(h_{t} A\right)^{(t)} \cap[0, c_{t}-2]
$$
and
$$
h_{t} a_{k}-D_{t}=\left(h_{t} A\right)^{(t)} \cap\left[h_{t} a_{k}-(d_{t}-2), h_{t} a_{k}\right].
$$
Then
\begin{eqnarray}\label{eq24}\left(h_{t} A\right)^{(t)}=C_{t} \cup
\left[c_{t}, h_{t} a_{k}-d_{t}\right] \cup\left(h_{t} a_{k}-D_{t}\right).
\end{eqnarray}
Therefore, (\ref{eq111}) holds for $h=h_{t}$.

Now we prove (\ref{eq111}) by induction on $h$.
Suppose that (\ref{eq111}) holds for some $h \geq h_{t}$. Define
$$B^{(t)}=C_{t} \cup\left[c_{t},(h+1) a_{k}-d_{t}\right] \cup\left((h+1) a_{k}-D_{t}\right).$$

Firstly we prove that $B^{(t)} \subseteq((h+1) A)^{(t)}.$

Take an arbitrary integer $b \in B^{(t)}$.

Case 1. $b \in C_{t} \cup\left[c_{t}, h_{t} a_{k}-d_{t}\right]$. By (\ref{eq24}), we have
$$b \in\left(h_{t} A\right)^{(t)} \subseteq((h+1) A)^{(t)}.$$

Case 2. $b \in\left[c_{t}+a_{k},(h+1) a_{k}-d_{t}\right] \cup\left((h+1) a_{k}-D_{t}\right)$. It follows that
$$b-a_{k} \in\left[c_{t}, h a_{k}-d_{t}\right] \cup\left(h a_{k}-D_{t}\right) \subseteq(h A)^{(t)}.$$
Thus, $b \in(h A)^{(t)}+a_{k} \subseteq((h+1) A)^{(t)}$.

Case 3. $h_ta_k-d_{t}+1\le b\le c_{t}+a_k-1$. By (\ref{eq22}) and (\ref{eq23}), we have
\begin{eqnarray}\label{eq25}c_t+a_{k}-1 \leq c'_{t} \leq h_{t} a_{k}-d_t+1.\end{eqnarray}
Thus $b=c'_{t}$. By Lemmas \ref{lem1} and \ref{lem3}, we have
$$b=c'_{t} \in\left(\left(h_{t}+1\right) A\right)^{(t)} \subseteq ((h+1) A)^{(t)}.$$

Therefore, $B^{(t)} \subseteq((h+1) A)^{(t)}$.

Next we shall prove that $((h+1) A)^{(t)}\subseteq B^{(t)}$.
Take an arbitrary integer $a \in((h+1) A)^{(t)}$.

Case 1. $a=c'_t$. By (\ref{eq25}) and $h \geq h_{t}$, we have
$$
c_{t} \leq c'_{t} \leq h_{t} a_{k}-d_{t}+1 \leq(h+1) a_{k}-d_{t}.
$$
Hence $a=c'_{t} \in B^{(t)}$.

Case 2. $a\not=c'_t$ and $a \notin(h A)^{(t)}$. Since $a \in((h+1) A)^{(t)}$, there exist $t$ nonnegative integer $k$-tuples
$(x_{1, s}, x_{2, s}, \ldots x_{k, s})~(1\le s\le t)$ satisfying
$$
a=x_{1, s} a_{1}+x_{2, s} a_{2}+\cdots+x_{k, s} a_{k}\quad \text{and}\quad x_{1, s}+x_{2, s}+\cdots+x_{k, s}=h+1.
$$
Furthermore, we can get
\begin{eqnarray}\label{eq100} 0 \leq x_{i, s} \leq t a_{i+1}-1,~ 1\le i\le k-1,~1\le s\le t.\end{eqnarray}
Otherwise, without loss of generality, assume that $x_{1,1} \geq ta_{2}$, then for $j=1,2,\ldots,t$,
we have
$$
\begin{aligned}
a &=x_{1,1} a_{1}+x_{2,1} a_{2}+\cdots+x_{k,1} a_{k} \\
&=\left(x_{1,1}-ja_{2}\right) a_{1}+\left(x_{2,1}+ja_{1}\right) a_{2}+\cdots+x_{k,1} a_{k}.
\end{aligned}
$$
Noting that for $j=1,2,\ldots,t$,
$$(x_{1,1}-ja_{2})+(x_{2,1}+ja_{1})+x_{3,1}+\cdots+x_{k,1}=h+1-j\left(a_{2}-a_{1}\right)<h+1,
$$
we have $a\in (h A)^{(t)}$, a contradiction. Hence the inequality (\ref{eq100}) holds.

By (\ref{eq100}), for $s=1,2,\ldots,t$, we have
$$
\begin{aligned}
a &=x_{1, s} a_{1}+x_{2, s} a_{2}+\cdots+x_{k, s} a_{k} \\
&=x_{1, s} a_{1}+\cdots+x_{k-1, s} a_{k-1}+\left(h+1-x_{1, s}-x_{2, s}-\cdots-x_{k-1, s}\right) a_{k} \\
&=(h+1) a_{k}-x_{1, s}\left(a_{k}-a_{1}\right)-\cdots-x_{k-1, s}\left(a_{k}-a_{k-1}\right) \\
& \geq(h+1) a_{k}-\left(t a_{2}-1\right)\left(a_{k}-a_{1}\right)-\cdots-\left(t a_{k}-1\right)\left(a_{k}-a_{k-1}\right) \\
&=(h+1) a_{k}-a_{k}\left[\left(t a_{2}-1\right)+\cdots+\left(t a_{k}-1\right)\right]+a_{1}\left(t a_{2}-1\right)+\cdots+a_{k-1}\left(t a_{k}-1\right) \\
&=(h+1) a_{k}-\left(h_{t}+1\right) a_{k}+c'_{t} \\
& \geq c'_{t}.
\end{aligned}
$$
Since $a \neq c'_{t},$ it follows that $a \geq c'_{t}+1$. By (\ref{eq22}), we have
$a \geq c'_{t}+1 \geq c_{t}+a_{k}.$

If $x_{k, s}=0$ for some $s$ with $1\le s\le t$, by (\ref{eq100}), then
$$
h+1=x_{1, s}+x_{2, s}+\cdots+x_{k-1, s} \leq\left(t a_{2}-1\right)+\cdots+\left(t a_{k}-1\right)=h_{t}+1 \leq h+1.
$$
Hence $x_{i, s}=t a_{i+1}-1$ for $i=1,2,\ldots,k-1$, and so
$$
a=\left(t a_{2}-1\right) a_{1}+\cdots+\left(t a_{k}-1\right) a_{k-1}=c'_{t},
$$
a contradiction.

Hence $x_{k, s} \geq 1$ for all integers $s=1,2,\ldots,t$.

Therefore, $a-a_{k} \in(h A)^{(t)}$ and $a-a_{k} \geq c_{t}$. By the induction hypothesis,
$$
a \in a_{k}+\left[c_{t}, h a_{k}-d_{t}\right] \cup\left(h a_{k}-D_{t}\right)=\left[c_{t}+a_{k},(h+1) a_{k}-d_{t}\right] \cup\left((h+1) a_{k}-D_{t}\right) \subseteq B^{(t)}.
$$

Case 3. $a\not=c'_t$ and $a \in(h A)^{(t)}$. By the induction hypothesis,~we have
$$
(h A)^{(t)}=C_{t} \cup\left[c_{t}, h a_{k}-d_{t}\right] \cup\left(h a_{k}-D_{t}\right).
$$
Since $C_{t} \cup\left[c_{t},(h+1) a_{k}-d_{t}\right] \subseteq B^{(t)},$ we can suppose that $a>(h+1) a_{k}-d_{t}$. By $a \in(h A)^{(t)},$ there exist at least $t$ distinct nonnegative
$k$-tuples $(x_{1,s},x_{2,s},\ldots,x_{k,s})~(1\le s\le t)$ such that
$$
a=x_{1, s} a_{1}+x_{2, s} a_{2}+\cdots+x_{k, s} a_{k}
$$
and
$$
x_{1, s}+x_{2, s}+\cdots+x_{k, s} \leq h.
$$

By Lemma 2, assume that~$0 \leq x_{i, s} \leq t a_{i+1}-1$ for $i=1,2, \ldots, k-1$.
If $x_{k, s}\leq 0$, then by (3) we have
$$
\begin{aligned}
a &\leq x_{1, s} a_{1}+x_{2, s} a_{2}+\cdots+x_{k-1, s} a_{k-1} \\
& \leq a_{1}\left(t a_{2}-1\right)+\cdots+a_{k-1}\left(t a_{k}-1\right) \\
&=c'_{t} \leq h_{t} a_{k}-d_{t}+1 \\
& \leq\left(h_{t}+1\right) a_{k}-d_{t} \leq(h+1) a_{k}-d_{t},
\end{aligned}
$$
which contradicts with $a>(h+1) a_{k}-d_{t}.$ Therefore $x_{k,s} \geq 1$ and $a-a_{k} \in(h A)^{(t)}.$ Since $a>(h+1) a_{k}-d_{t},$ it follows that $a-a_{k} \in h a_{k}-D_{t}.$
Hence
$$
a \in(h+1) a_{k}-D_{t} \subseteq B^{(t)},
$$
and so $((h+1) A)^{(t)} \subseteq B^{(t)}$.

This completes the proof of Theorem 1.
\end{proof}

\begin{proof}[Proof of Remark \ref{rem1}]
Let $n\geq 3$ be an integer and $A=\{0,~n,~n+1\}$. By Theorem 1, there exist integers $c_{t}$, $d_{t}$ and sets $C_{t} \subseteq[0, c_{t}-2]$, $D_{t} \subseteq[0, d_{t}-2]$ such that
$$(h A)^{(t)}=C_{t} \cup\left[c_{t}, h a_{k}-d_{t}\right] \cup\left(h a_{k}-D_{t}\right)$$
for all $h\geq h_t=t(n+1)-2$.

For any integer $m\ge c_t$, choose an integer $h'\geq t(n+1)-2$
such that $h' a_{k}-d_{t}\ge m$, then we have $m\in (h'A)^{(t)}$.

Hence, there exist $t$ nonnegative integer tuples $(u_{i},v_{i})~(1\le i\le t)$ such that $m=u_{i}n+v_{i}(n+1)$.

On the other hand, there does not exist $t$ nonnegative integer tuples $(u_{i},v_{i})~(1\le i\le t)$ such that $c_{t}-1=u_{i}n+v_{i}(n+1)$. Otherwise, if exist, choose
$h>\max_{1\le i\le t}\{u_i+v_i\}$, then we have $c_t-1\in (hA)^{(t)}$, a contradiction.
Hence, by Lemma \ref{lem4},  it follows that
$c_{t}-1=ta_1a_2-a_1-a_2=tn(n+1)-n-(n+1)$, and then $c_t=tn(n+1)-2n$.

Let $p \in ((h_{t}-1)A)^{(t)}$. Then there exist $t$ nonnegative integer tuples $(u_{i},v_{i})~(1\le i\le t)$ such that $p=u_{i}n+v_{i}(n+1)$ and $u_1>u_2>\cdots>u_t$
are the maximal $t$ numbers in all the representations. Hence
$$
\begin{aligned}
p=u_{1}n+v_{1}(n+1)&=[u_{1}-(n+1)]n+(v_{1}+n)(n+1)\\
&=[u_{1}-2(n+1)]n+(v_{1}+2n)(n+1)\\
&=\cdots \\
&=[u_{1}-(t-1)(n+1)]n+[v_{1}+(t-1)n](n+1).
\end{aligned}
$$
It follows that $u_{t}=u_{1}-(t-1)(n+1)$, $v_t=v_{1}+(t-1)n$.
Noting that
$$u_{t}+v_{t} < u_{t-1}+v_{t-1} < \cdots < u_{1}+v_{1} \leq h_{t}-1,$$
we have
$$
\begin{aligned}
u_{t}+v_{t}&=u_{1}-(t-1)(n+1)+v_{1}+(t-1)n\\
&=u_{1}+v_{1}-(t-1)\leq h_{t}-1-(t-1)=tn-2.
\end{aligned}
$$
Hence, for every  $p \in ((h_{t}-1)A)^{(t)}$,
$$
\begin{aligned}
p&=u_{t}n+v_{t}(n+1)\leq (u_{t}+v_{t})(n+1)\leq (tn-2)(n+1)\\
&=tn(n+1)-2(n+1)< tn(n+1)-2n=c_{t}.
\end{aligned}
$$
By (\ref{eq111}), it follows that
$$
((h_{t}-1)A)^{(t)}\subseteq [0,tn(n+1)-2(n+1)].
$$
Therefore, (\ref{eq111}) cannot hold for $h=h_{t}-1$, and so Theorem \ref{thm1} is optimal.
\end{proof}

%\begin{theorem}\label{thm1} Let $k\ge 2$ be an integer and $A=\{a_0,a_1,\ldots,a_k\}$ be
%a set of integers satisfying
%$0=a_0<a_1<\cdots<a_k$ and $(a_1,a_2,\ldots,a_k)=1$. Then
%$$(hA)^{(t)}=C_t\cup [c,ha_k-d]\cup (ha_k-D_t)$$
%for all integers $h\ge h_t$, where
%$c$, $d$ are positive integers, $h_t=\sum_{i=2}^k (ta_i-1)-1$, and
%$C_t\subseteq [0,c-2]$ and $D_t\subseteq [0,d-2]$.
%\end{theorem}
%
%\begin{proof}[Proof]
%\end{proof}

%\section{Acknowledgement}

\end{document}